\theoremstyle{plain}
\newtheorem{theorem}{Theorem}
\newtheorem{lemma}[theorem]{Lemma}
\newtheorem{assumption}[subsection]{Assumption}
\theoremstyle{definition}
\newtheorem{definition}[subsection]{Definition}
\newtheorem{example}[subsection]{Example}
\newtheorem{nothing*}[subsection]{}
\theoremstyle{remark}
\newcommand{\cS}{{\ensuremath{\mathcal{S}}}}
\newcommand{\cB}{{\ensuremath{\mathcal{B}}}}
\newcommand{\cA}{{\ensuremath{\mathcal{A}}}}
\newcommand{\cU}{{\ensuremath{\mathcal{U}}}}
\newcommand{\re}{\mathrm{Re}}
\newcommand{\im}{\mathrm{Im}}
\newcommand{\C}{\mathbb{C}}
\newcommand{\R}{\mathbb{R}}
\newcommand{\N}{\mathbb{N}}
\newcommand{\ra}{\rightarrow}
\title{A RECONSTRUCTION THEOREM FOR COMPLEX POLYNOMIALS}
\author{Luka Boc Thaler}
\begin{document}
\thanks{The author was supported by the research program P1-0291 from ARRS, Republic of Slovenia}
\address{Faculty of Education, University of Ljubljana, Kardeljeva ploščad 16, 1000 Ljubljana, Slovenia}
\address{ Institute of Mathematics, Physics, and Mechanics, Jandranska 19, 1000 Ljubljana, Slovenia.} \email{luka.boc@pef.uni-lj.si}

\begin{abstract}
 Recently Takens' Reconstruction Theorem was studied in the complex analytic setting by Forn{\ae}ss and Peters \cite{FP}. They studied the real orbits of complex polynomials, and proved that for non-exceptional polynomials ergodic properties such as measure theoretic entropy are carried over to the real orbits mapping. Here we show that the result from \cite{FP} also holds for exceptional polynomials, unless the Julia set is entirely contained in an invariant vertical line, in which case the entropy is $0$.

In \cite{T2} Takens proved a reconstruction theorem for endomorphisms. In this case the reconstruction map is not necessarily an embedding, but the information of the reconstruction map is sufficient to recover the $2m+1$-st image of the original map. Our main result shows an analogous statement for the iteration of generic complex polynomials and the projection onto the real axis.
\end{abstract}

\keywords{Complex dynamics; polynomials; entropy.}

\subjclass[2010]{37F10}
%
%
%
%
%
\maketitle

\section{Introduction}
In science dynamical systems can be represented, using mathematical methods, with equations which describe the evolution of the system over time. Understanding the dynamics from these equations may be difficult since they may rely on many different parameters. In order to simplify the problem we can suppress some of the parameters and obtain the results from this new equations. It is important to ask whether results obtained this way resemble the original dynamics.

Takens studied this kind of questions in \cite{T1} where he has proved the following theorem.

\begin{theorem}\label{thmtak1}
Let $M$ be a compact manifold of dimension $m$. For an open dense set of pairs $(\varphi, y)$, $\varphi: M \rightarrow M$ a $\mathcal{C}^2$ diffeomorphism and $y: M \rightarrow \mathbb R$ a $\mathcal C^2$ function, the map $\Phi_{(\varphi, y)}: M \rightarrow \mathbb R^{2m+1}$, defined by
$$
\Phi_{(\varphi, y)}(x) = (y(x), y(\varphi(x)), \ldots , y(\varphi^{2m}(x)))
$$
is an embedding.
\end{theorem}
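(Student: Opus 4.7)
The plan is to show that the set of pairs $(\varphi,y)$ for which $\Phi_{(\varphi,y)}$ is an embedding is residual in the $\mathcal{C}^2$ topology via Thom transversality. Since $M$ is compact, an injective immersion is automatically an embedding, so it suffices to establish genericity of the immersion property and of injectivity. I would proceed in two stages, first regularizing the dynamics of $\varphi$, then perturbing $y$ to enforce the two conditions.

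As a preparatory step, an arbitrarily small $\mathcal{C}^2$ perturbation of $\varphi$ places it in a Kupka--Smale class, making all periodic points of period at most $2m+1$ hyperbolic and isolated, hence only finitely many. For the immersion condition I would write $d\Phi_x$ as the $(2m+1)\times m$ matrix with rows $dy_{\varphi^j(x)}\circ d(\varphi^j)_x$ for $j=0,\ldots,2m$. Away from the small-period periodic set, the $2m+1$ orbit points are pairwise distinct, and jet transversality applied to the $(2m+1)$-tuple jet of $y$ yields, for a generic $y$, full column rank $m$ at every such $x$; the dimension count is favorable since we have $2m+1$ observations feeding an $m$-dimensional tangent space. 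At the finitely many surviving periodic points, the rank condition is verified by an explicit local perturbation of $y$ along each orbit, using hyperbolicity of $d\varphi^k$.

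For injectivity, consider $\Psi\colon (M\times M)\setminus \Delta \to \mathbb{R}^{2m+1}$ defined by $\Psi(x_1,x_2)=\Phi(x_1)-\Phi(x_2)$. Because $\dim\bigl((M\times M)\setminus \Delta\bigr)=2m<2m+1$, a generic $y$ makes $\Psi$ transverse to $\{0\}$ and hence forces $\Psi$ to miss $0$. Injectivity near the diagonal is ensured by the immersion condition already established, and a standard compactness argument combines the two statements to give an open dense set of good pairs.

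The principal obstacle is the handling of periodic orbits of period $k\le 2m$. At such a point $x$ the sequence $y(\varphi^j(x))$ has only $k$ distinct entries as $j$ runs from $0$ to $2m$, and similarly a pair $(x_1,x_2)$ with $x_2=\varphi^j(x_1)$ produces many coincidences among the coordinates of $\Psi(x_1,x_2)$, threatening both the rank and the injectivity argument. Overcoming this requires a direct linear-algebraic analysis on each short orbit: hyperbolicity from the preparatory step allows independent perturbation of $y$ and of its first derivatives at the finitely many orbit points, and a Vandermonde-type computation shows that $m$ of the resulting rows (respectively $2m+1$ of the resulting coordinates) become linearly independent (resp.\ jointly nonzero), restoring the generic picture.
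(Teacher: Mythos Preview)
The paper does not prove this statement. Theorem~\ref{thmtak1} is quoted in the introduction as Takens' original reconstruction theorem, with a citation to \cite{T1}, and no argument for it is given anywhere in the paper; it serves purely as background and motivation for the paper's own results (Theorems~\ref{thm2} and~\ref{thm6}). There is therefore nothing in the paper to compare your proposal against.

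That said, your outline is essentially the standard strategy used in proofs of Takens' theorem: reduce to a Kupka--Smale diffeomorphism so that short periodic orbits are finite and hyperbolic, use multijet transversality and dimension counting for the immersion and injectivity conditions on the open stratum where the $2m+1$ orbit points are distinct, and handle the finitely many short periodic orbits by a direct linear-algebra perturbation of $y$ exploiting that the eigenvalues of $d\varphi^k$ are off the unit circle. One point to be careful about in a full write-up is the genericity statement for \emph{pairs} $(\varphi,y)$: perturbing $\varphi$ first and then $y$ gives density, but you should also note that the embedding condition is $\mathcal{C}^1$-open on a compact manifold, so the good set of pairs is genuinely open and dense rather than merely residual. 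Your treatment of the periodic-orbit obstruction is a sketch rather than a proof; the Vandermonde-type argument you allude to requires, in addition to hyperbolicity, that the eigenvalues of the return map be distinct (or a further small perturbation to arrange this), and the injectivity case where $x_2$ lies on the $\varphi$-orbit of $x_1$ deserves its own explicit treatment.
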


Hence all information about the original dynamical system can be retrieved from the suppressed dynamical system. This theorem no longer holds if we do not assume that the map $\varphi$ is injective. In \cite{T2} Takens proved the following theorem.

\begin{theorem}\label{thmtak2}
Let $M$ be a compact manifold of dimension $m$ and take $k>2m$. For an open dense set of pairs $(\varphi, y)$, $\varphi: M \rightarrow M$ a smooth endomorphism and $y: M \rightarrow \mathbb R$ a smooth function,  there exist  a map $\tau:\Phi_{(\varphi, y)}(M)\rightarrow M$  such that
 $$\tau\circ\Phi_{(\varphi, y)}=\varphi^{k-1},$$
 where  $\Phi_{(\varphi, y)}: M \rightarrow \mathbb R^{k}$  is defined as
$$
\Phi_{(\varphi, y)}(x) = (y(x), y(\varphi(x)), \ldots , y(\varphi^{k-1}(x))).
$$
Moreover if $k>2m+1$ then the set $\Phi_{(\varphi, y)}(M)$ completely determines the deterministic structure of the time series produced by the dynamical system.
\end{theorem}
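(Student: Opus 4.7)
The plan is to reduce the existence of $\tau$ to a generic fiber-collapse property of $\Phi_{(\varphi,y)}$, and then establish that property via parametric transversality. The key observation is that any candidate $\tau\colon \Phi_{(\varphi,y)}(M)\to M$ satisfying $\tau\circ\Phi_{(\varphi,y)}=\varphi^{k-1}$ must be constant on the fibers of $\Phi_{(\varphi,y)}$. Conversely, if for every $(x,x')\in M\times M$ the implication
\[
\Phi_{(\varphi,y)}(x)=\Phi_{(\varphi,y)}(x')\ \Longrightarrow\ \varphi^{k-1}(x)=\varphi^{k-1}(x')
\]
holds, then the formula $\tau(\Phi_{(\varphi,y)}(x)):=\varphi^{k-1}(x)$ gives a well-defined map, and continuity follows from compactness of $M$ together with continuity of $\Phi_{(\varphi,y)}$ and $\varphi^{k-1}$. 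Thus the task reduces to proving that this fiber-collapse compatibility holds for a residual set of pairs $(\varphi,y)$.

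To this end, introduce the bad set
\[
\cB_{(\varphi,y)}:=\{(x,x')\in M\times M\colon \Phi_{(\varphi,y)}(x)=\Phi_{(\varphi,y)}(x'),\ \varphi^{k-1}(x)\neq\varphi^{k-1}(x')\}.
\]
A simple iteration argument shows that on $\cB_{(\varphi,y)}$ the iterates satisfy $\varphi^i(x)\neq\varphi^i(x')$ for every $0\le i\le k-1$, since any equality at some $i$ would propagate under $\varphi$ and force equality at index $k-1$. Consider the parametric evaluation map
\[
\cE(x,x',\varphi,y):=\bigl(y(\varphi^i(x))-y(\varphi^i(x'))\bigr)_{i=0}^{k-1},
\]
defined on $\bigl((M\times M)\setminus\Delta\bigr)\times \mathcal{C}^\infty(M,M)\times \mathcal{C}^\infty(M,\R)$ with values in $\R^k$. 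Provided $\cE$ is transverse to $\{0\}$, Abraham's parametric transversality theorem yields that for a residual set of pairs $(\varphi,y)$ the slice $\cE(\cdot,\cdot,\varphi,y)^{-1}(0)$ is a submanifold of $(M\times M)\setminus\Delta$ of dimension $2m-k$. Since $k>2m$, this dimension is negative, forcing the slice, and hence $\cB_{(\varphi,y)}$, to be empty.

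The main obstacle is verifying transversality of $\cE$ with $\{0\}$. At configurations where the $2k$ sample points $\{\varphi^i(x),\varphi^i(x')\}_{i=0}^{k-1}$ are pairwise distinct, a bump-function perturbation of $y$ gives a surjective derivative onto $\R^k$, so transversality is immediate. The delicate cases are orbit self-intersections (periodic or preperiodic behavior, or incidences $\varphi^i(x)=\varphi^j(x')$ with $i\neq j$), which constrain the admissible $\delta y$-perturbations. The standard remedy is to stratify $(M\times M)\setminus\Delta$ by the incidence combinatorics of the sample points; on each stratum one combines perturbations of $y$ with perturbations of $\varphi$ that break the relevant coincidences, and invokes a multijet version of Thom's transversality theorem. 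Finally, the ``moreover'' statement for $k>2m+1$ is obtained by applying the same machinery to the augmented $(k+1)$-coordinate map: generically $\Phi_{(\varphi,y)}(x)=\Phi_{(\varphi,y)}(x')$ additionally forces $y(\varphi^k(x))=y(\varphi^k(x'))$, which is precisely the determinism condition making the shift on $\Phi_{(\varphi,y)}(M)\subset\R^k$ well defined, so that the invariant set together with this shift reproduces the full time series.
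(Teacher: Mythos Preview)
The paper does not prove this theorem. Theorem~\ref{thmtak2} is stated in the Introduction as a result of Takens \cite{T2} and is cited without proof; the paper's own contributions begin with Theorem~\ref{thm2} and Theorem~\ref{thm6}. So there is no proof in the paper to compare your proposal against.

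As for the proposal itself, the overall architecture is the standard one for Takens-type results and is sound: reduce to the fiber-collapse condition, set up the parametric evaluation map $\cE$, and invoke multijet transversality. Your observation that on $\cB_{(\varphi,y)}$ one has $\varphi^i(x)\neq\varphi^i(x')$ for all $i\le k-1$ is correct and is indeed the point that makes the endomorphism case tractable. What you have written is, however, a sketch rather than a proof: the stratification by orbit-incidence combinatorics and the verification that perturbations of $\varphi$ (not just $y$) suffice to achieve transversality on each stratum are precisely the technical heart of Takens' argument in \cite{T2}, and you have only named them. In particular, periodic points of low period impose genuine constraints that require separate genericity hypotheses on $\varphi$ (distinct eigenvalues, nondegenerate read-out along periodic orbits), and these do not fall out of a one-line appeal to multijet transversality. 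If you intend this as an outline pointing to \cite{T2} for details, it is accurate; if you intend it as a self-contained proof, the stratified transversality step needs to be carried out explicitly.
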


Recently Forn\ae ss and Peters \cite{FP} studied in how far the dynamical behaviour of complex polynomials can be deduced from knowing only their real orbits. They proved that for non-exceptional polynomials, measure theoretical entropy can be recovered from the real part of the orbits. In this paper we extend their result to the set of all polynomials:

\begin{theorem}\label{thm2} Let $P(z)$ be a complex polynomial of degree $d\geq 2$ and $\nu=\Phi_*(\mu)$, where $\mu$ is the equilibrium measure for $P(z)$. Then the probability measure $\nu$ is invariant and ergodic. Moreover $\nu$ is the unique measure of maximal entropy $\log d$, except when the Julia set of $P$ is contained in invariant line, then its entropy equals zero.
\end{theorem}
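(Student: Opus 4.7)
The plan is to exploit the semiconjugacy $\Phi\circ P=\sigma\circ\Phi$ between the polynomial iteration and the shift $\sigma$ on the space of real sequences. Invariance of $\nu=\Phi_*\mu$ is immediate from $\sigma_*\nu=(\sigma\circ\Phi)_*\mu=(\Phi\circ P)_*\mu=\Phi_*\mu=\nu$. Ergodicity of $\nu$ is inherited because $\mu$ is ergodic for $P$ (Brolin/Lyubich) and a factor of an ergodic system is ergodic. The same factor relation together with the variational principle already yields $h_\sigma(\nu)\leq h_P(\mu)=\log d$, so the crux is to prove the matching lower bound whenever $J$ is not contained in a vertical invariant line.

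For the lower bound I would show that $\Phi$ is $\mu$-essentially injective. Introduce
\[
S=\{(z,w)\in J\times J:\re(P^n(z))=\re(P^n(w))\text{ for all }n\geq 0\};
\]
this set is closed and forward-invariant under $P\times P$, hence almost-invariant in the measure-theoretic sense since $\mu\times\mu$ is preserved. By Lyubich's theorem the equilibrium measure $\mu$ is mixing, so $(\mu\times\mu)$ is ergodic for $P\times P$ and $(\mu\times\mu)(S)\in\{0,1\}$. In the case $(\mu\times\mu)(S)=0$, the identity $(\mu\times\mu)(S)=\int\mu(\Phi^{-1}(\Phi(z)))\,d\mu(z)$ forces $\mu(\Phi^{-1}(\Phi(z)))=0$ for $\mu$-a.e.\ $z$, so $\Phi$ restricts to a measure-theoretic isomorphism of $(J,P,\mu)$ with $(\Phi(J),\sigma,\nu)$. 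This gives $h_\sigma(\nu)=\log d$; uniqueness of $\nu$ as maximal entropy measure then follows by lifting any hypothetical competitor through $\Phi$ and invoking the uniqueness of $\mu$ due to Lyubich.

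The alternative $(\mu\times\mu)(S)=1$ must correspond exactly to the exception of the statement. Fixing any $z_0$ in the support of $\mu$, Fubini gives $\Phi(w)=\Phi(z_0)$ for $\mu$-a.e.\ $w$; since $\Phi$ is continuous and $J=\mathrm{supp}\,\mu$, one obtains $\Phi\equiv\Phi(z_0)$ on all of $J$. In particular $\re$ is constant on $J$, say $\re\equiv a$, so $J$ lies in the vertical line $L:=\{a+it:t\in\R\}$. Because $\re(P(a+it))-a$ is a real polynomial in $t$ vanishing on the infinite set $\{t:a+it\in J\}$, it vanishes identically, showing $P(L)\subset L$. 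Hence $L$ is an invariant vertical line, $\nu=\delta_{\Phi(z_0)}$, and $h_\sigma(\nu)=0$.

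The main obstacle is identifying the $(\mu\times\mu)(S)=1$ branch of the dichotomy with the precise geometric configuration asserted in the statement; the pleasant point is that Lyubich's mixing theorem applies uniformly to every polynomial of degree $\geq 2$, so no case analysis between the exceptional and non-exceptional polynomials of Forn\ae ss--Peters is needed. Once the dichotomy is in place, the remaining items---invariance, ergodicity, the isomorphism-based entropy transfer, and the recovery of uniqueness---follow by routine manipulations.
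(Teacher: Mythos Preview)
Your dichotomy via the mixing of $\mu\times\mu$ is an attractive idea, and the treatment of the branch $(\mu\times\mu)(S)=1$ is clean and correct: it really does force $\re$ to be constant on $J$ and hence $J$ to lie in an invariant vertical line. The gap is in the other branch. From $(\mu\times\mu)(S)=0$ you deduce $\mu(\Phi^{-1}(\Phi(z)))=0$ for $\mu$-a.e.\ $z$, and then assert that $\Phi$ is a measure-theoretic isomorphism. This implication is false: since $\mu$ is non-atomic, \emph{every} finite (or even countable) fiber has $\mu$-measure zero, so the condition says nothing about injectivity. A concrete counterexample is any real polynomial $P$ with $J_P\not\subset\R$: then $\Phi(z)=\Phi(\bar z)$ for all $z$, so $\Phi$ is generically $2$-to-$1$ on $J_P$, yet the graph $\{(z,\bar z):z\in J_P\}\subset S$ is one-dimensional and has $(\mu\times\mu)$-measure zero. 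In such examples the conclusion $h_\sigma(\nu)=\log d$ is still true, but not for the reason you give; entropy can be preserved under finite-to-one factor maps without the factor map being an isomorphism. The same issue undermines your uniqueness argument: ``lifting a competitor through $\Phi$'' requires exactly the kind of bounded-fiber structure (outside a null set) that you have not established.

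For comparison, the paper does not attempt to prove that $\Phi$ is essentially injective. It first observes that although $Q$ fails to be globally continuous for exceptional $P$, its restriction to the compact set $J_Q=\Phi(J_P)$ \emph{is} continuous, so Bowen's local-entropy formula $h_\nu(Q)=k_\nu(Q)$ applies. The lower bound $h_\nu(Q)\ge\log d$ is then obtained by a direct estimate on Bowen balls: one shows, following the non-exceptional argument of Forn\ae ss--Peters, that $\mu(B'(n,\ell-1,\epsilon))\le d^{-1}\mu(B'(n,\ell,\epsilon))$ for all but a density-$1/k$ set of indices $\ell$, the only new ingredient being Lemma~\ref{lem1} ensuring $\mu(L)=0$ when $J_P\not\subset L$. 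Uniqueness is handled via Lemmas~\ref{lemma3} and \ref{lemma2}, which use the boundedness of the fibers of $\Phi$ (coming from Lemma~\ref{lemFP}) to lift an arbitrary ergodic measure on the image back to $\C$. If you want to rescue your approach, this bounded-fiber fact is precisely what you need to import; combined with an Abramov--Rokhlin type argument it would let you bound the entropy defect of the factor map, but the product-measure dichotomy alone does not suffice.
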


The main difficulty here is that in the exceptional case the induced map on the real orbits map does not extend continuously to the natural compactification. The background and proof of Theorem \ref{thm2} are given in sections 2 and 3.

\medskip

The following theorem, whose proof will be given in the last section, is the main result. It is in the same spirit as Theorem \ref{thmtak2} but for complex polynomials and the standard projection to $\R$.

\begin{theorem}\label{thm6}
For a generic  polynomial $P$ of degree $d\geq 2$ there exists $M,N\in\N$ and a map $\tau:\Phi(\C)\rightarrow \C$  such that
$$
\tau\circ\Phi=P^M,
$$
where $$\Phi(z) = (\re(z), \re(P(z)), \ldots , \re(P^{N}(z))).$$
\end{theorem}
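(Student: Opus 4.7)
The plan is to recast Theorem~\ref{thm6} as a fiber-collapse statement: if we can find $N,M\in\N$ such that $\Phi_N(z_1)=\Phi_N(z_2)$ implies $P^M(z_1)=P^M(z_2)$, then setting $\tau(\Phi_N(z)):=P^M(z)$ gives a well-defined map on $\Phi_N(\C)$ satisfying $\tau\circ\Phi_N=P^M$. The task thus reduces to establishing fiber collapse for a generic $P$, in analogy with Takens' argument for Theorem~\ref{thmtak2} but adapted to the rigid algebraic setting of complex polynomials.

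First I would show that the fibers of $\Phi_N$ are finite for $N$ moderately large and $P$ outside a proper algebraic subset of coefficient space. If $\Phi_N(z_1)=\Phi_N(z_2)$ with $z_1\ne z_2$, the first coordinate forces $z_2=z_1+it$ for some $t\ne 0$, and the remaining coordinates yield $N$ polynomial equations
\[
\re\bigl(P^k(z_1+it)-P^k(z_1)\bigr)=0,\qquad k=1,\dots,N,
\]
in the three real unknowns $\re(z_1),\im(z_1),t$. A dimension count shows that generically $N\ge 3$ already cuts these out to a zero-dimensional set off $t=0$, so the bad locus $B_N:=\{(z_1,z_2):z_1\ne z_2,\ \Phi_N(z_1)=\Phi_N(z_2)\}$ is finite.

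Second, I would show that for generic $P$ the finitely many pairs in $B_N$ are eventually identified by some $P^M$. Passing to the decreasing intersection $B_\infty:=\bigcap_N B_N$, consisting of pairs whose real orbits agree for every iterate, I would argue that $B_\infty$ is contained in the diagonal: a non-trivial element of $B_\infty$ would produce a non-trivial symmetry of the push-forward equilibrium measure $\Phi_*\mu$, contradicting the uniqueness of the maximal-entropy measure furnished by Theorem~\ref{thm2} in the non-exceptional case. Because each $B_N$ is finite and their infinite intersection is trivial, every non-diagonal pair in $B_N$ must merge under $P\times P$ in bounded time; taking $M$ to be the maximum merging time over the finite set $B_N$ yields the required uniform $M$.

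The main obstacle is making the genericity condition effective. One must identify a Zariski open (or at least residual) subset of polynomial coefficient space on which both the dimension count for $B_N$ and the collapse of $B_\infty$ onto the diagonal hold simultaneously. Explicit loci to exclude include polynomials with real coefficients (where complex conjugation forces $\Phi_N$ to be two-to-one), polynomials commuting with a non-trivial affine isometry of $\C$, and the case of a Julia set contained in an invariant line covered by Theorem~\ref{thm2}. I expect the delicate algebraic bookkeeping needed to rule out these and potentially subtler coincidences to constitute the bulk of the proof.
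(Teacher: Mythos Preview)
Your reduction to a fiber-collapse statement is correct, but the two central claims of your argument fail. First, $B_N$ is \emph{not} finite: by Lemma~\ref{lemFP} the sequence $B_N$ stabilizes, so for large $N$ one has $B_N=B_\infty=X_P\setminus\Delta_P$, the full mirrored set. This set is always non-empty (as observed at the start of Section~4) and generically one-dimensional. Your dimension count overlooks that the equations are far from independent: any pair $(z,w)$ with $\re(z)=\re(w)$ and $P(z)=P(w)$ satisfies $\re(P^k(z))=\re(P^k(w))$ for every $k\ge 1$ trivially, and such pairs already form a one-parameter family. Second, the entropy argument does not show $B_\infty\subset\Delta$: a single mirrored pair yields no global symmetry of $\Phi_*\mu$, and the conclusion is in any case false. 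Consequently the final implication (``$B_N$ finite and $B_\infty$ trivial $\Rightarrow$ bounded merging time'') has no content, since $B_N=B_\infty$ is neither finite nor trivial.

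The actual work is to show that every mirror eventually \emph{breaks}, i.e.\ that the $(P\times P)$-invariant set $\cY_P$ of mirrored pairs whose mirror never breaks is empty for generic $P$. The paper first establishes compactness of $\cY_P$ via behavior near infinity (Lemma~\ref{lem18}), then uses the generic conditions of Remarks~\ref{rem1}--\ref{rem4} to see that $\cY_P$ is a compact $(P\times P)$-invariant real algebraic set disjoint from the diagonal, hence contained in $J_P\times J_P$ and free of periodic points. Each connected component of $\cY_P$ is then periodic, and the Lefschetz fixed-point theorem applied to its projection to the first coordinate forces a periodic point of $P$ inside it, a contradiction. This topological fixed-point step, together with the compactness lemma near infinity, is the key mechanism missing from your outline.
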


By {\it generic} we always mean a countable intersection of open dense sets.  One may ask if there exist an upper bound for $M$, $N$ which is independent of the polynomial. If we would consider all polynomials, then the answer would obviously be  negative. On the other hand there might exist an upper bound which works for a generic polynomial. If such a bound exists, it would be very interesting to find the lowest possible upper bound.

\section{Entropy}

In this section we recall the definition and some basic properties of measure theoretical entropy. The reader is referred to \cite{PY}.

Let $(X,\cB,\mu)$ be a probability space and $\cU=\{A_i\}$ a finite partition, this means that $\mu(X\backslash\cup_i A_i)=0$ and $\mu(A_i\cap A_j)=0$ for all $i\neq j$. Let  $F:X\rightarrow X$ be a $\mu$-preserving map  (i.e. $\mu$ is $F$ invariant).
Let us define the $n$-th partition as $$\cU^n:=\bigvee^{n-1}_{k=0}F^{-k}(\cU).$$
By the definition every set $A\in\cU^n$ is of the form
$$A=A_0\cap F^{-1}(A_1)\cap\ldots\cap F^{-n+1}(A_{n-1}),$$
where $A_k\in\cU$.
Measure theoretical entropy $h_{\mu}(F)$ is now defined as
\begin{equation}\label{ent1}h_\mu(F)=\sup_{\cU}\left(\lim_{n\rightarrow\infty}-\frac{1}{n}\sum_{A\in\cU^n}\mu(A)\log\mu(A)\right).\end{equation}

\begin{lemma}\label{lem0}Let $(X_1,\cB_1,\mu_1)$ and $(X_2,\cB_2,\mu_2)$ be probability spaces and let $T_i:X_i\rightarrow X_i$ be  $\mu_i$-preserving maps. Suppose there is a surjective map $\Phi:X_1\rightarrow X_2$ with the properties $\Phi\circ T_1=T_2\circ \Phi$ and $\mu_2=\Phi_*(\mu_1)$, then $h_{\mu_2}(T_2)\leq h_{\mu_1}(T_1)$.
\end{lemma}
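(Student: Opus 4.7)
The plan is to reduce the lemma to the observation that every finite partition of $X_2$ pulls back through $\Phi$ to a partition of $X_1$ with identical combinatorics and identical atom masses, after which the entropy formula (\ref{ent1}) becomes a tautology on corresponding partitions. Given a finite partition $\cU=\{A_i\}$ of $X_2$, I would form the pullback $\Phi^{-1}(\cU):=\{\Phi^{-1}(A_i)\}$. The pushforward identity $\mu_2=\Phi_*(\mu_1)$ gives $\mu_1(\Phi^{-1}(A_i))=\mu_2(A_i)$ at once, and, together with the surjectivity of $\Phi$, it ensures that $\Phi^{-1}(\cU)$ is a partition of $X_1$ modulo $\mu_1$-null sets, hence an admissible partition in (\ref{ent1}).

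Next I would exploit the semiconjugacy $\Phi\circ T_1=T_2\circ\Phi$, which iterates to $T_1^{-k}\circ\Phi^{-1}=\Phi^{-1}\circ T_2^{-k}$ for every $k\geq 0$. Joining over $k=0,\dots,n-1$ this yields $(\Phi^{-1}(\cU))^n=\Phi^{-1}(\cU^n)$, so the atoms of the refined partition upstairs are exactly the $\Phi$-preimages of the atoms of $\cU^n$, and by the pushforward relation their $\mu_1$-masses agree with the $\mu_2$-masses of the corresponding atoms downstairs. Plugging this into (\ref{ent1}) shows that the entropy sums for $(T_1,\Phi^{-1}(\cU))$ and $(T_2,\cU)$ agree term-by-term for every $n$, so after passing to the limit
$$h_{\mu_1}(T_1,\Phi^{-1}(\cU))=h_{\mu_2}(T_2,\cU).$$

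To conclude, since the supremum in (\ref{ent1}) for $(X_1,T_1,\mu_1)$ runs over \emph{all} finite partitions of $X_1$, including the pullbacks of partitions of $X_2$, we have $h_{\mu_1}(T_1)\geq h_{\mu_1}(T_1,\Phi^{-1}(\cU))=h_{\mu_2}(T_2,\cU)$ for every finite partition $\cU$ of $X_2$; taking the supremum over $\cU$ delivers the stated inequality. The only point that requires actual care is verifying that $\Phi^{-1}(\cU)$ is genuinely a partition in the measure-theoretic sense, i.e.\ that the $\mu_2$-null exterior and overlaps of $\cU$ downstairs pull back to $\mu_1$-null sets upstairs; this is where the hypothesis $\mu_2=\Phi_*(\mu_1)$ is used non-trivially, and is the only real obstacle in an otherwise routine argument.
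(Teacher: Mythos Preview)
Your argument is correct and is the standard textbook proof that entropy does not increase under factor maps. Note, however, that the paper does not actually supply a proof of this lemma: it is stated as a known fact (with a reference to \cite{PY}) and the text immediately moves on. One minor remark: surjectivity of $\Phi$ plays no role in your argument, since the pushforward identity $\mu_2=\Phi_*(\mu_1)$ alone already guarantees that $\Phi^{-1}(\cU)$ is a $\mu_1$-partition and carries the same atom masses.
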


In [B1] Bowen proposed the following slightly different definition of measure theoretic entropy.

Let $X$ be a compact topological space, $F:X\rightarrow X$ a continuous map and $\mu$ an $F$-invariant ergodic probability measure.  Let $U$ be a neighborhood of the diagonal in $X\times X$, and let us define
$$B(x,n,U)=\{y\in X\mid  \{(x,y),(F(x),F(y)),\ldots,(F^{n-1}(x),F^{n-1}(y))\}\subset U\}.$$
The entropy function
\begin{equation}\label{ent2}k_{\mu}(x,F)=\sup_U\left(\lim_{n\rightarrow\infty}-\frac{1}{n}\log(\mu(B(x,n,U)))\right).\end{equation}
 is constant $d\mu$-almost everywhere, and the measure theoretic entropy
is defined to be this constant. When $X$ is a metric space, the sets $B(x,n,U)$  can be replaced by $(n,\varepsilon)$-balls
$$B(x,n,\varepsilon)=\{y\mid d(F^k(x),F^k(y))<\varepsilon,k<n \}.$$

When $X$ is compact,  $F:X\rightarrow X$ a continuous map and $\mu$ an $F$-invariant ergodic probability measure we have $k_{\mu}(Q)=h_{\mu}(Q)$ (see \cite{BK}).
\medskip

The next lemmas are standard results in this topic and their slightly modified versions can be found in \cite{FP}.

Let $P$ be a self map on measurable space $X$ and let $Q$ be a self map on measurable space $Y$ such that there exist a surjective map $\Phi:X\rightarrow Y$ with a property $$\Phi \circ P=Q\circ\Phi.$$

\begin{lemma}\label{lemma1}
If $\lambda$ is invariant on $X$, then the push-forward $\nu$ is invariant on $Y$. If $\lambda$ also is ergodic, then the push-forward, $\nu,$ is ergodic as well.
\end{lemma}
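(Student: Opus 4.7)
The plan is to prove both statements directly from the semi-conjugacy $\Phi\circ P=Q\circ\Phi$ by pulling back sets from $Y$ to $X$ and exploiting properties of $\lambda$. Since $\nu=\Phi_*(\lambda)$, every statement about $\nu$-measures of Borel sets in $Y$ can be translated to a statement about $\lambda$-measures of their $\Phi$-preimages in $X$; the semi-conjugacy then lets us swap $Q^{-1}$ with $P^{-1}$ under $\Phi^{-1}$.

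For invariance, I would take a measurable $B\subset Y$ and compute $\nu(Q^{-1}(B))$ by unwinding definitions: $\nu(Q^{-1}(B))=\lambda(\Phi^{-1}(Q^{-1}(B)))=\lambda((Q\circ\Phi)^{-1}(B))=\lambda((\Phi\circ P)^{-1}(B))=\lambda(P^{-1}(\Phi^{-1}(B)))$. Since $\lambda$ is $P$-invariant this equals $\lambda(\Phi^{-1}(B))=\nu(B)$, giving $Q$-invariance of $\nu$. This step is purely formal and only uses surjectivity insofar as it guarantees $\Phi_*(\lambda)$ is a well-defined probability measure on $Y$.

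For ergodicity, I would assume $B\subset Y$ is a $Q$-invariant measurable set (i.e.\ $Q^{-1}(B)=B$ mod $\nu$) and show that $A:=\Phi^{-1}(B)$ is a $P$-invariant measurable subset of $X$. Indeed, by the same chain of equalities,
$$P^{-1}(A)=P^{-1}(\Phi^{-1}(B))=(\Phi\circ P)^{-1}(B)=(Q\circ\Phi)^{-1}(B)=\Phi^{-1}(Q^{-1}(B))=\Phi^{-1}(B)=A$$
(up to $\lambda$-null sets, which is exactly where one uses that $\nu$-null sets pull back to $\lambda$-null sets via $\nu=\Phi_*(\lambda)$). Ergodicity of $\lambda$ then forces $\lambda(A)\in\{0,1\}$, and since $\nu(B)=\lambda(\Phi^{-1}(B))=\lambda(A)$, we conclude $\nu(B)\in\{0,1\}$, as required.

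There is essentially no obstacle here; the only subtlety worth flagging is the careful handling of mod-null identifications, namely that a mod-$\nu$ invariant set $B$ pulls back to a mod-$\lambda$ invariant set $A$, which is immediate from $\nu=\Phi_*(\lambda)$. Everything else is diagram-chasing with the intertwining relation $\Phi\circ P=Q\circ\Phi$.
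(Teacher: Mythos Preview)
Your argument is correct and is precisely the standard semi-conjugacy proof of this fact. The paper itself does not supply a proof of Lemma~\ref{lemma1}; it simply records it as a standard result and refers the reader to \cite{FP}, so there is no alternative approach to compare against.
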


\begin{lemma}\label{lemma3} Let $\nu$ be invariant ergodic measure on $Y$, $E\subset Y$ with $\nu(E)=0 $, and  $N\in \mathbb{N}$. Suppose that $\#\{\Phi^{-1}(y)\}\leq N$ for any $y\in Y\backslash E$.  Then $\nu$ is the push forward of an invariant ergodic measure on $X$.
\end{lemma}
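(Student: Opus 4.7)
The strategy is to first construct any probability measure on $X$ that pushes forward to $\nu$, then average it to secure $P$-invariance, and finally use the ergodic decomposition together with ergodicity of $\nu$ to extract an ergodic lift.

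An initial probability lift $\tilde\lambda$ of $\nu$ is available precisely because of the finite-fibre hypothesis. For any measurable $A\subset X$ I would set
$$\tilde\lambda(A) \;=\; \int_{Y\setminus E}\frac{\#\bigl(\Phi^{-1}(y)\cap A\bigr)}{\#\Phi^{-1}(y)}\,d\nu(y).$$
The integrand is bounded by $1$, so this is a probability measure, and a short calculation using that $\Phi^{-1}(y)\cap\Phi^{-1}(B)$ equals $\Phi^{-1}(y)$ or $\emptyset$ according as $y\in B$ or not gives $\Phi_*\tilde\lambda=\nu$. This is the only step in which the bound $N$ is used.

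Next I would apply a Krylov--Bogolyubov averaging argument. The Cesàro means $\lambda_n=\frac{1}{n}\sum_{k=0}^{n-1} P^k_*\tilde\lambda$ all satisfy $\Phi_*\lambda_n=\nu$, because $\Phi\circ P=Q\circ\Phi$ and $\nu$ is $Q$-invariant force $\Phi_*P^k_*\tilde\lambda=Q^k_*\nu=\nu$. Hence any weak-* accumulation point $\lambda$ is $P$-invariant and still satisfies $\Phi_*\lambda=\nu$. To upgrade to ergodicity I would invoke the ergodic decomposition $\lambda=\int \lambda_\omega\,dm(\omega)$, in which each $\lambda_\omega$ is $P$-invariant and ergodic. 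Pushing forward gives
$$\nu \;=\; \int \Phi_*\lambda_\omega \,dm(\omega),$$
a convex integral of $Q$-invariant probability measures. By ergodicity $\nu$ is an extreme point of this convex set, so $\Phi_*\lambda_\omega=\nu$ for $m$-almost every $\omega$; any such component is the required ergodic lift, and Lemma \ref{lemma1} applied in reverse confirms it is mapped to $\nu$.

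The main hurdle is not conceptual but regularity-theoretic: one has to ensure that $y\mapsto\#(\Phi^{-1}(y)\cap A)$ is $\nu$-measurable, that Cesàro averages of pushforward measures admit weak-* accumulation points which commute with $\Phi_*$, and that the ergodic decomposition theorem is available. All of these are standard in the paper's setting of continuous maps between Polish subsets of Euclidean space, so the finite-fibre hypothesis, used exactly once to produce $\tilde\lambda$, really is the only non-trivial structural input.
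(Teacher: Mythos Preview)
The paper does not prove this lemma; it is listed among ``standard results'' and the reader is referred to \cite{FP}. Your argument---build a lift by spreading mass uniformly over the finite fibres, Ces\`aro-average to force $P$-invariance, then pass to an ergodic component and use that $\nu$, being ergodic, is extreme among $Q$-invariant probability measures---is the standard route and is correct in the concrete situation where the lemma is actually applied in this paper (continuous $\Phi$, measures living on the compact Julia set, so Prokhorov supplies the weak-$*$ accumulation point for free).

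One point deserves to be made sharper. The lemma is stated for bare measurable spaces, and in that generality the Krylov--Bogolyubov step is meaningless; even on a non-compact Polish $X$ you need tightness of $\{\lambda_n\}$, which does not come automatically. In the paper's application it follows either from compactness of $J_P$ or from properness of $\Phi$ together with $\Phi_*\lambda_n=\nu$. If one wants an argument valid for abstract measurable spaces, the bound $N$ must be invoked a second time: lifts of $\nu$ can be identified with measurable maps $Y\to\Delta_{N-1}$, a convex weak-$*$ compact subset of $L^\infty(Y,\nu;\mathbb R^N)$, and Markov--Kakutani then produces a $P_*$-fixed point without any topology on $X$. So your closing remark that the finite-fibre hypothesis is ``used exactly once'' is accurate only in the topological version you actually run.
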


\begin{lemma}\label{lemma2} Suppose we have the setting of Lemma \ref{lemma3}. Let $\mu$ be the unique invariant ergodic measure of maximal entropy on $X$. If $h_{\mu}(P)=h_{\nu}(Q)$,  then $\nu=\Phi_*(\mu)$ and it is the unique measure of maximal entropy on $Y$.
\end{lemma}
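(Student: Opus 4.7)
The plan is a short deduction from Lemmas \ref{lem0} and \ref{lemma3} combined with the uniqueness hypothesis for $\mu$ on $X$. First, Lemma \ref{lemma3} supplies an invariant ergodic measure $\lambda$ on $X$ with $\Phi_*(\lambda)=\nu$. Applying Lemma \ref{lem0} to the semi-conjugacy $\Phi$ gives $h_\nu(Q)\leq h_\lambda(P)$, and since $\mu$ is by assumption the unique invariant ergodic measure of maximal entropy on $X$, we also have $h_\lambda(P)\leq h_\mu(P)$. Together with the standing hypothesis $h_\mu(P)=h_\nu(Q)$ these inequalities collapse:
$$
h_\mu(P)=h_\nu(Q)\leq h_\lambda(P)\leq h_\mu(P),
$$
so $h_\lambda(P)=h_\mu(P)$. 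Uniqueness of the measure of maximal entropy on $X$ forces $\lambda=\mu$, and therefore $\nu=\Phi_*(\mu)$.

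For the uniqueness of $\nu$ as a measure of maximal entropy on $Y$, I would run the same lifting argument on any competitor. Let $\nu'$ be an invariant ergodic measure on $Y$ with $\nu'(E)=0$. Lemma \ref{lemma3} produces an invariant ergodic $\lambda'$ on $X$ with $\Phi_*(\lambda')=\nu'$, and Lemma \ref{lem0} combined with the uniqueness of $\mu$ yields $h_{\nu'}(Q)\leq h_{\lambda'}(P)\leq h_\mu(P)$. Hence $h_\mu(P)$ is an upper bound for the entropy of such competitors, and whenever equality holds throughout, the uniqueness of $\mu$ forces $\lambda'=\mu$, so $\nu'=\Phi_*(\mu)=\nu$. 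Non-ergodic competitors reduce to the ergodic case by ergodic decomposition: entropy is affine over ergodic components, so if a measure attains the maximum then almost every component does as well, and each such component falls under the ergodic argument just given.

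The core of the proof is essentially bookkeeping once Lemmas \ref{lem0} and \ref{lemma3} are granted; the only mildly delicate point is ensuring that the hypothesis $\nu'(E)=0$ needed to invoke Lemma \ref{lemma3} is available for every competitor $\nu'$ one wants to exclude. Within the abstract framework of the lemma this is part of the standing setup, but in the intended application to Theorem \ref{thm2}, where $E$ is the thin exceptional set on which fibers of $\Phi$ may blow up, one would have to verify separately that no invariant measure on $Y$ concentrated on $E$ can attain entropy $h_\mu(P)$. That verification is the single place where the lemma meets the actual geometry of the complex-polynomial setting, and it is the main obstacle I would expect to encounter when promoting this soft deduction into a statement about the global maximum entropy on $Y$.
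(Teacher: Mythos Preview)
The paper does not actually prove Lemma~\ref{lemma2}: it is listed among ``standard results'' whose ``slightly modified versions can be found in \cite{FP}'', and no argument is given in the text. Your deduction---lift $\nu$ via Lemma~\ref{lemma3}, compare entropies through Lemma~\ref{lem0}, and invoke uniqueness of $\mu$---is exactly the standard argument one would expect to find in \cite{FP}, and it is correct.

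Your closing caveat is also well placed. The phrase ``the setting of Lemma~\ref{lemma3}'' fixes one particular $\nu$ with $\nu(E)=0$; nothing in the abstract hypotheses forces a rival invariant ergodic measure $\nu'$ to satisfy $\nu'(E)=0$, so Lemma~\ref{lemma3} is not automatically available for $\nu'$. In the paper's application (Theorem~\ref{thm2}) the exceptional set $E$ is handled concretely---the relevant thin sets have $\mu$-measure zero and carry no competing measure of entropy $\log d$---so the issue does not bite there, but you are right that as a purely abstract statement the ``unique measure of maximal entropy on $Y$'' clause tacitly assumes this extra control over $E$. Flagging that is a genuine point, not pedantry.
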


\section{Exceptional polynomials}

Given a complex polynomial $P$ let us define the set of real orbits as $$\cA:=\left\{\left(\re(z),\re(P(z)),\re(P^2(z)),\ldots\right)| z\in \mathbb{C}\right\},$$ and observe that the map $P$ naturally defines a shift map $$\rho:\cA\rightarrow\cA.$$
The following lemma is the crucial result of \cite{FP}.
\begin{lemma}\label{lemFP}There exists an $N\in\mathbb{N}$ with the property: If $\re(P^k(z))=\re(P^k(w))$ holds for every $0\leq k\leq N$, then it holds for all  $k\geq0$.
\end{lemma}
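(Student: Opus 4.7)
The plan is to derive the lemma from the Noetherianity of the polynomial ring in four real variables. Writing $z = x + iy$ and $w = u + iv$, the iterate $P^k$ is a complex polynomial in $z$, so its real part $\re(P^k(z))$ is a real polynomial in $(x,y)$. Consequently
$$f_k(z,w) := \re(P^k(z)) - \re(P^k(w))$$
belongs to the polynomial ring $R := \mathbb{R}[x,y,u,v]$ for every $k \geq 0$.

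First I would form the ascending chain of ideals $I_k := (f_0, f_1, \ldots, f_k) \subseteq R$ and apply Hilbert's basis theorem. Since $R$ is Noetherian, there exists $N \in \mathbb{N}$ such that $I_N = I_{N+j}$ for every $j \geq 0$; in particular, for each $k \geq N$ one can write
$$f_k = \sum_{j=0}^{N} g_{j,k}\, f_j, \qquad g_{j,k} \in R.$$
Next, for any pair $(z,w) \in \mathbb{C}^2$ with $f_0(z,w) = \cdots = f_N(z,w) = 0$, substitution into the above identity shows $f_k(z,w) = 0$ for every $k \geq 0$. Unravelling the definition of $f_k$ gives exactly the conclusion of the lemma.

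The main obstacle is not the existence of $N$, which is produced rather painlessly by the abstract chain condition, but rather effectivity: the Noetherian proof gives no quantitative handle on $N$ and does not reveal whether $N$ can be chosen to depend only on the degree $d$ of $P$. For the applications in \cite{FP} and in the later theorems of this paper one might prefer such an effective bound, which would presumably require additional input from the complex structure of the iterates $P^k$---for instance a bound on the Castelnuovo--Mumford regularity of the $I_k$, or a careful dimension count for the associated real algebraic subsets of $\mathbb{R}^4$. For the statement of the lemma as written, however, mere existence suffices.
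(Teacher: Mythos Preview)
Your argument is correct. The paper itself does not prove this lemma---it is quoted from \cite{FP} (referred to there as Lemma~2.3)---but the underlying idea is precisely the Noetherian one you give: in \cite{FP} and in the later Section~4 of the present paper the reasoning is phrased dually, as a descending chain of real algebraic sets $V_k=\{f_0=\cdots=f_k=0\}\subset\R^4$ that must stabilize. Your ascending-chain-of-ideals version is the same argument seen from the other side.

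One comment on your closing worry about whether $N$ can be taken to depend only on $d$. No regularity bounds or dimension counts are needed: the same trick works, applied in a larger polynomial ring. Write $P=a_dz^d+\cdots+a_0$ and treat the real and imaginary parts of the $a_j$ as additional indeterminates, so that each $f_k$ lies in
\[
\mathbb{R}[x,y,u,v,\ \re a_0,\im a_0,\ldots,\re a_d,\im a_d].
\]
The ascending chain $I_0\subset I_1\subset\cdots$ in this (still Noetherian) ring stabilizes at some $N$, and that $N$ then serves uniformly for every polynomial of degree at most $d$. This is exactly what the paper does in Section~4 when it passes to the set $X\subset\C^2\times\C^{d+1}$ and observes that it is real algebraic. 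So the effectivity concern you raise, at least at the level of ``$N$ depends only on $d$'', is handled by Noetherianity alone; what remains genuinely open is a concrete numerical value for $N$.
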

Hence  $\cA$ can be identified with a two dimensional subset of $\mathbb{R}^{N+1}$. Let us define a map
$\Phi:\mathbb{C}\rightarrow \mathbb{R}^{N+1}$  as $$\Phi(z)=\left(\re(z),\re(P(z)),\ldots,\re(P^N(z))\right),$$
and let us denote its image by $\cS$.  The polynomial $P$ induces a map $Q$ on $\cS$ with the property $$\Phi \circ P=Q\circ \Phi.$$
Obviously the dynamics of $(\cA,\rho)$ is completely determined by the dynamics of $(\cS,Q)$. For a dense and open set of polynomials the map $Q$ is continuous, but there are some cases where this is not true. See \cite{FP} for the following theorem.
\begin{theorem}\label{thm28}If $P$ is exceptional, then  the image $\Phi(\mathbb C)$ is not closed in $\mathbb R^{N+1}$, hence  $Q$ is not globally continuous map.
\end{theorem}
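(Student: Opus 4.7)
My plan is to exploit the fact that, by hypothesis, the Julia set of an exceptional $P$ is contained in an invariant vertical line, and to construct an explicit sequence $z_n\to\infty$ in $\C$ whose $\Phi$-image converges to a point of $\R^{N+1}$ that does not lie in $\Phi(\C)$; this will show $\Phi(\C)$ is not closed. After a horizontal translation I may assume the invariant line is the imaginary axis, so $P(i\R)\subset i\R$. Writing $P(z)=\sum_{k=0}^{d}c_{k}z^{k}$, the condition $P(iy)\in i\R$ for every $y\in\R$ forces $c_{k}\in\R$ for odd $k$ and $c_{k}\in i\R$ for even $k$. Two structural consequences will drive the argument: $P$ maps $i\R$ into $i\R$, and $P'(i\eta)$ is real-valued with $P'(i\eta)\sim C\eta^{d-1}$ for some nonzero real constant $C$.

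For $z=\xi+i\eta$ with $|\xi|$ small and $|\eta|$ large, Taylor expansion gives
$$
P(\xi+i\eta)=P(i\eta)+P'(i\eta)\,\xi+O\!\bigl(\eta^{d-2}\xi^{2}\bigr),
$$
so $\re P(\xi+i\eta)\sim C\eta^{d-1}\xi$ and $\im P(\xi+i\eta)\sim C'\eta^{d}$ for a nonzero real $C'$. I then take $z_{n}:=\xi_{n}+i\eta_{n}$ with $\eta_{n}\to\infty$ and $\xi_{n}:=t/\eta_{n}^{\,d^{N}-1}$ for a fixed $t\neq 0$, and verify inductively that, writing $P^{k}(z_{n})=\xi_{n}^{(k)}+i\eta_{n}^{(k)}$, one has $\eta_{n}^{(k)}\sim\eta_{n}^{d^{k}}$ and
$$
\xi_{n}^{(k)}\sim C^{k}\,t\,\eta_{n}^{\,d^{k}-d^{N}}.
$$
Hence $\xi_{n}^{(k)}\to 0$ for $0\leq k<N$ while $\xi_{n}^{(N)}\to C^{N}t$, so $\Phi(z_{n})\to p:=(0,\ldots,0,C^{N}t)\in\R^{N+1}$. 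Finally, if $p=\Phi(w)$ for some $w\in\C$, the vanishing of the first $N$ coordinates forces $w,P(w),\ldots,P^{N-1}(w)\in i\R$; but invariance of $i\R$ then forces $P^{N}(w)\in i\R$, so $\re P^{N}(w)=0\neq C^{N}t$, a contradiction. Thus $\Phi(\C)$ is not closed, and the failure of $Q$ to be globally continuous follows because any continuous extension of $Q$ to $\overline{\Phi(\C)}$ would have to assign $p$ a value consistent with every sequence of $\Phi(z_{n})$ converging to $p$, and one can perturb the construction to yield sequences with distinct iterate limits.

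The main obstacle I anticipate is the careful bookkeeping of error terms in the inductive step. The exponent $d^{N}-1$ in $\xi_{n}$ is chosen so that $\xi_{n}^{(N)}$ sits precisely at order $1$, which leaves very little slack: at each $k\leq N-1$ I must verify that the quadratic remainder $O(\eta^{d-2}\xi^{2})$ is strictly dominated by the linear term $C\eta^{d-1}\xi$. A direct computation shows the accumulated quadratic error at step $k$ is of order $\eta_{n}^{-2d^{k}}$ or smaller, which validates the linearization simultaneously along the entire orbit segment of length $N$ and so delivers the asymptotics claimed above.
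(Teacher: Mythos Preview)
Your opening premise is incorrect: being exceptional does \emph{not} mean that the Julia set lies in an invariant vertical line. The paper defines two classes. A polynomial is \emph{strongly exceptional} if it maps some vertical line into itself; it is \emph{weakly exceptional} if $a_{d}i^{d-1}\in\R$, $a_{d-1}=0$, and $a_{k}i^{k-1}\notin\R$ for at least one $k\le d-2$. Weakly exceptional polynomials have no invariant vertical line at all, and even for strongly exceptional $P$ the Julia set need not sit inside the invariant line (the paper isolates the special sub-case $J_{P}\subset L$ separately in Lemma~\ref{lem2}).

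Your actual computations never use the Julia-set claim; they use only $P(i\R)\subset i\R$, and with that hypothesis your construction is essentially sound (modulo harmless sloppiness in tracking the constants $C,C'$ through the induction and in handling the sign of $\eta_n^{(k)}$). So you have treated the strongly exceptional case. The weakly exceptional case, however, is simply missing. There the imaginary axis is \emph{not} invariant: one computes
\[
\re P(i\eta)\;=\;-\sum_{k\le d-2}\im\!\bigl(a_{k}i^{k-1}\bigr)\,\eta^{k},
\]
a nonzero polynomial of degree at most $d-2$, so your key identity $\re P(i\eta)=0$ fails and the induction collapses at the first step. A separate argument is required, exploiting that along $i\R$ the real part grows at a strictly lower rate than the imaginary part (order $\le d-2$ versus $d$); this is not the same bookkeeping as in your scheme. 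The paper itself does not prove Theorem~\ref{thm28} but defers it entirely to Forn\ae ss--Peters \cite{FP}; you should consult the proof of Theorem~2.8 there for the treatment of both cases.
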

 The following two types of polynomials are called exceptional.
\begin{definition} A complex polynomial $P(z)$ is strongly exceptional if it maps a vertical line to itself.
\end{definition}

\begin{definition} A complex polynomial $P(z)=a_d z^d+\sum_{k\leq d-2}a_kz^k$ is weakly exceptional if  $a_di^{d-1}$ is real but there is least one $0\leq k\leq d-2$ for
which $a_ki^{k-1}$ is not real.
\end{definition}
We say $P$ is non-exceptional if it is not strongly or weakly exceptional. The following theorem is the main result of \cite{FP}.
\begin{theorem}\label{thm1}Let $P$ be a non-exceptional  complex polynomial of degree $d\geq 2$ and $\nu=\Phi_*(\mu)$, where $\mu$ is the equilibrium measure for $P(z)$. Then the probability measure $\nu$ is invariant and ergodic. Moreover it is the unique measure of maximal entropy, $\log d$.
\end{theorem}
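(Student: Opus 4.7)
The invariance and ergodicity of $\nu$ are immediate consequences of Lemma \ref{lemma1}, using that $\mu$ is $P$-invariant and ergodic and that $\Phi\circ P=Q\circ\Phi$. For the degenerate case, if the Julia set $J_P$ lies in a $P$-invariant vertical line $\ell=\{\re(z)=c\}$, then $\re(P^k(z))=c$ for every $z\in J_P$ and every $k\ge 0$, so $\Phi|_{J_P}$ collapses to the single point $(c,c,\ldots,c)$. Since $\mathrm{supp}(\mu)=J_P$, the measure $\nu$ is a Dirac mass and $h_\nu(Q)=0$, as claimed.

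Assume from now on that we are not in the degenerate case. Since the non-exceptional case is Theorem \ref{thm1}, it remains to handle weakly exceptional polynomials and strongly exceptional polynomials whose Julia set is not contained in the invariant vertical line. Although $Q$ is globally discontinuous on $\cS=\Phi(\C)$ by Theorem \ref{thm28}, the restriction $Q|_{\Phi(J_P)}$ is continuous: $\Phi(J_P)$ is compact, and continuity follows from a standard sequential-compactness argument using $\Phi\circ P=Q\circ\Phi$. Since $\mathrm{supp}(\nu)\subset\Phi(J_P)$, the entropy $h_\nu(Q)$ can safely be computed in this compact, continuous setting, and the Bowen formulation (\ref{ent2}) is available.

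The heart of the proof is to verify the hypotheses of Lemma \ref{lemma3}: there should exist $N_0\in\N$ and a $\nu$-null $E\subset\Phi(J_P)$ with $\#\Phi^{-1}(y)\le N_0$ for every $y\in\Phi(J_P)\setminus E$. By Lemma \ref{lemFP}, $\Phi^{-1}(\Phi(z))=\{w\in\C:\re(P^k(w))=\re(P^k(z))\text{ for all } k\ge 0\}$. Introduce $F_P(z,w)=(P(z),P(w))$ on $\C^2$ and the closed $F_P$-invariant set $\cE=\{(z,w)\in\C^2:\re(P^k(z))=\re(P^k(w))\text{ for all } k\ge 0\}$; the horizontal slices of $\cE$ are the fibers of $\Phi$. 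The plan is to show that the slice $\cE_z=\{w:(z,w)\in\cE\}$ is finite of uniformly bounded size for $\mu$-a.e.\ $z$. In the strongly exceptional, non-degenerate case the oversized slices are supported on preimages of the invariant vertical line, a set of $\mu$-measure zero since $J_P\not\subset\ell$ by assumption. In the weakly exceptional case the leading-order symmetry constrains $\cE_z$ to a proper algebraic subvariety of $\C^2$ of bounded degree, and the fact that $\mu$ assigns no mass to any such subvariety of $J_P$ excludes the problematic slices.

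Once finite fibers are established, Lemma \ref{lemma3} produces an invariant ergodic lift $\tilde\mu$ of $\nu$ to $\C$, and the standard Bowen argument on the compact set $\Phi(J_P)$ shows that a finite-to-one factor map preserves entropy: $h_{\tilde\mu}(P)=h_\nu(Q)$. By the Brolin--Lyubich theorem, $\mu$ is the unique invariant measure on $\C$ of entropy $\log d$, so $\tilde\mu=\mu$ and $h_\nu(Q)=\log d$; Lemma \ref{lemma2} then identifies $\nu$ as the unique measure of maximal entropy on $\cS$. The main obstacle is the uniform fiber bound in the third paragraph, especially in the weakly exceptional case, where one must rule out persistent ``shadow orbits'' accompanying $\mu$-typical orbits with matching real parts at every iteration.
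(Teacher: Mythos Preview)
Your proposal addresses the wrong statement. Theorem~\ref{thm1} concerns only \emph{non-exceptional} polynomials, and in this paper it is not proved at all: it is quoted as the main result of Forn{\ae}ss--Peters~\cite{FP}. There is therefore no proof in the paper to compare against. What you have actually written is an outline for Theorem~\ref{thm2}, the extension to exceptional polynomials; you say so yourself (``Since the non-exceptional case is Theorem~\ref{thm1}, it remains to handle weakly exceptional polynomials and strongly exceptional polynomials\ldots'').

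Read as an attempt at Theorem~\ref{thm2}, your route differs substantively from the paper's. The paper does \emph{not} deduce $h_\nu(Q)=\log d$ from a finite-fiber entropy-preservation principle. Instead it shows directly, via Bowen $(n,\varepsilon)$-balls and the inequality~\eqref{eq:division} (which is Lemma~4.10 of~\cite{FP} repeated after enlarging the exceptional set $Y$ by the invariant line $L$), that $k_\nu(Q)\ge\log d$ at $\nu$-generic points; only then are Lemmas~\ref{lemma3} and~\ref{lemma2} invoked for uniqueness. Your alternative---first prove uniformly bounded fibers, then use that finite-to-one factors preserve entropy---is in principle legitimate, but two problems remain. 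First, the entropy step as written is circular: you lift $\nu$ to some $\tilde\mu$, obtain $h_{\tilde\mu}(P)=h_\nu(Q)$, and then conclude $\tilde\mu=\mu$ from Brolin--Lyubich, but that last step requires $h_{\tilde\mu}(P)=\log d$, i.e.\ the very equality $h_\nu(Q)=\log d$ you are trying to prove. (The repair is easy: apply entropy preservation with $\mu$ itself, which is already a lift of $\nu$.) Second, and more seriously, you concede that the uniform fiber bound ``especially in the weakly exceptional case'' is not actually established; the sentences about ``leading-order symmetry'' and ``proper algebraic subvariety of bounded degree'' are gestures rather than arguments. The paper's direct Bowen-ball computation sidesteps this obstacle entirely for the entropy value, and needs the fiber bound only for uniqueness, where it is inherited from~\cite{FP}.
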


The problem for generalizing this theorem lies in Theorem \ref{thm28}. For exceptional polynomials the set $\cS$ is not closed and $Q$ is not continuous. On the other hand $Q$ is still continuous when restricted to the $\Phi(U)$, where $U\subset\mathbb{C}$ is bounded (see the proof of Theorem 2.8 in \cite{FP}). By observing that in this case $k_{\nu}(Q)$ is well defined and that we have an equality between $k_{\nu}(Q)$ and $h_{\nu}(Q)$, one can slightly modify the original proof of Forn{\ae}ss and Peters to obtain a full result.

\begin{lemma}\label{lem2} Let $P(z)$ be a strongly exceptional holomorphic polynomial of degree $d\geq2$ whose Julia set is contained in the invariant vertical line. Then the probability measure $\nu$ is
invariant and ergodic. The measure $\nu$ is a Dirac measure at the origin, hence $h_{\nu}(Q)=0$.
\end{lemma}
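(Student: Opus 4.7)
The proof is essentially a chase through the definitions once one identifies the right geometric fact. First, I would unpack the hypothesis. Since $P$ preserves a vertical line $L=\{\re(z)=c\}$ and the Julia set satisfies $J\subset L$, the forward invariance $P(J)\subset J$ gives $P^k(z)\in J\subset L$, and hence $\re(P^k(z))=c$, for every $z\in J$ and every $k\geq 0$. Consequently $\Phi$ is constant on $J$, taking the single value $p:=(c,c,\ldots,c)\in\R^{N+1}$.

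Next, the equilibrium measure $\mu$ is supported on $J$, so its push-forward is simply $\nu=\Phi_*(\mu)=\delta_p$. Invariance of $\nu$ under $Q$ is immediate from $Q\circ\Phi=\Phi\circ P$ applied to any $z\in J$, which yields $Q(p)=p$; ergodicity is trivial because every measurable set in $\cS$ has $\nu$-measure either $0$ or $1$. (Alternatively one can quote Lemma \ref{lemma1}, since $\Phi:\C\to\cS$ is surjective by definition of $\cS$.) The entropy computation is then read off directly from (\ref{ent1}): for any finite partition $\cU$, exactly one atom of each refinement $\cU^n$ carries $\nu$-measure $1$ while all others have $\nu$-measure $0$, so $-\sum_{A\in\cU^n}\nu(A)\log\nu(A)=0$ for every $n$, and taking the supremum over $\cU$ gives $h_\nu(Q)=0$.

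I do not foresee a substantive obstacle — the statement is almost definitional once one has the observation that $\Phi$ collapses $J$ to a point. The only minor interpretive wrinkle is the phrase \emph{Dirac measure at the origin}: for a general invariant line one lands at the constant vector $(c,\ldots,c)$, which coincides with the origin of $\R^{N+1}$ precisely when the invariant line is the imaginary axis (i.e.\ after the harmless translation $z\mapsto z-c$). The location of the atom plays no role in the entropy conclusion, which depends only on the measure being supported at a fixed point of $Q$.
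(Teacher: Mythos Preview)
Your argument is correct and follows essentially the same route as the paper: push $\mu$ forward via $\Phi$, observe that $\Phi$ collapses $J$ to a single point so $\nu$ is a Dirac mass, and then read off $h_\nu(Q)=0$ from the partition definition~(\ref{ent1}). Your remark about the atom sitting at $(c,\ldots,c)$ rather than literally at the origin is a fair clarification, and your invocation of Lemma~\ref{lemma1} for invariance and ergodicity is the correct reference.
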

\begin{proof} The measure $\nu$ is invariant and ergodic by Lemma \ref{lem0}. It is also clear that $\nu$ is supported only in the point $\{0\}$. We shall compute the measure theoretic entropy using the  definition (\ref{ent1}). Take any open cover $\cU$ of $\cS$ and denote by $M$ the number of sets $\cU$ containing $0$. There will be only $M$ sets in any $\cU^n$ containing $0$, hence  one concludes that $h_{\nu}(Q)=0$.
\end{proof}

\begin{lemma}\label{lem1} Let $P(z)$ be a holomorphic polynomial of degree $d\geq2$ with an invariant vertical real line  $L\supset P(L)$. If  $\mu$ denotes invariant ergodic measure, then  $\mu(L)=0$ or $\mu(L)=1$.
\end{lemma}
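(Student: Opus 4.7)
The plan is to show that $L$ is $\mu$-almost surely invariant under $P^{-1}$ and then invoke ergodicity, so the whole argument reduces to standard abstract ergodic theory.

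First I would note that $L$, being a closed subset of $\C$, is $\mu$-measurable. The hypothesis $P(L) \subset L$ gives the set-theoretic inclusion $L \subset P^{-1}(L)$. Since $\mu$ is $P$-invariant, one has
\[
\mu(P^{-1}(L)) = \mu(L),
\]
and combining this with the inclusion yields $\mu(P^{-1}(L) \setminus L) = 0$. In other words, $L$ and $P^{-1}(L)$ agree up to a $\mu$-null set, so
\[
\mu\bigl(L \,\triangle\, P^{-1}(L)\bigr) = 0.
\]

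Thus $L$ is an invariant set in the measure-theoretic sense. Ergodicity of $\mu$ then forces $\mu(L) \in \{0,1\}$, which is exactly the desired dichotomy.

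The main step, such as it is, is the observation that forward invariance of $L$ at the level of sets automatically upgrades to essential backward invariance once we integrate against a $P$-invariant measure; no complex-analytic input or deeper property of the Julia set is needed. There is no real obstacle here: the lemma is a clean application of the definition of ergodicity applied to the measurable set $L$.
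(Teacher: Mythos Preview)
Your argument is correct and essentially identical to the paper's: the paper works with the complement $A=\C\setminus L$, observes $P^{-1}(A)\subset A$ (equivalently your $L\subset P^{-1}(L)$), uses invariance of $\mu$ to get $\mu(A\setminus P^{-1}(A))=0$, and then invokes ergodicity. The only cosmetic difference is that you phrase everything in terms of $L$ directly rather than its complement.
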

\begin{proof} Let $A=\mathbb{C}\backslash L$. Since $\mu$ is ergodic we need to show that $\mu(A\backslash P^{-1}(A))=0$ and $\mu(P^{-1}(A)\backslash A)=0$. Observe that $P^{-1}(A)\subset A$, hence we only need to prove the first equality. Since $\mu$ is also an invariant measure we obtain $\mu(A)=\mu(P^{-1}(A))$, hence the first equality holds and by ergodicity  $\mu(A)$ is either 0 or 1.
\end{proof}

A set $\{(z,w)\in\mathbb{C}^2\mid \re(P^k(z))=\re(P^k(w)) \quad\forall k\geq0\}$ is called the {\it mirrored set}. We say that $w$ {\it mirrors} $z$ iff $(z,w)$ belongs to the mirrored set. A point $z$ is called a {\it mirrored point} iff there exist a point $w\neq z$ that mirrors it.

\begin{proof}[Proof of Theorem \ref{thm2}] Let $J_P$ denote the Julia set of $P(z)$ and let $J_Q=\Phi(J_P)$. We can naturally obtain $J_Q$ from knowing $(\cS,Q)$, by taking the limit set of the set of all preimages of generic point in $\cS$, hence it is compact. To prove this, take a generic point $x\in\cS$ and take any point $z\in \Phi^{-1}(x)$. We know that the limit set of all preimages of $z$ under $P$ equals $J_P$, hence from $\Phi \circ P=Q\circ \Phi$ we can conclude that the limit set of preimages of $x$ under $Q$ equals to $\Phi(J_P)$. This means that we can obtain $J_Q$ directly from the given data, without knowing $J_P$ and $P$.

 The map $Q:J_Q\ra J_Q$ is a continuous map (using subspace topology in $\mathbb{R}^{N+1}$). Since $\nu$ is a push-forward of $\mu$ it is supported on the compact set $J_Q$ and by Lemma \ref{lemma1} it is invariant and ergodic. Under this conditions, see [BK], we get following equality
$$h_{\nu}(Q)=k_{\nu}(Q).$$

Let us assume that $P$ is strongly exceptional with an invariant vertical line $L$, and that $J_P$ is not contained in $L$, hence $\mu(L)=0$. The invariance and ergodicity of $\nu$ is given by the Lemma \ref{lemma1}. If we prove  that $h_{\nu}(Q)=\log d$, then Lemmas \ref{lemma3}, \ref{lemma2} will finish the proof.

Let $X\subset \cS$ denote the finite set of points $\Phi(z)$ where $z$ is either an isolated mirror point, a singular point of the one dimensional mirror set, an isolated cluster point on the diagonal for mirrored points, or a critical point in either the Julia set or a parabolic basin. We include in $X$ the full orbit for any periodic point in $X.$

Let $Y'$ be the union of the vertical lines passing through the points in $X$. By Lemma \ref{lem1} we get $\mu(Y'\cup L)=0$. Let us denote $Y=Y'\cup L$.

Let $k>>1$ be an integer. Since $\mu(Y) = 0$, there exists an $\delta_0>0$ so that $N_{\delta_0}(Y))$, the $\delta_0$-neighborhood of $Y$, satisfies $\mu(N_{\delta_0}(Y)) < \frac{1}{k}$. Let $z$ now be a \emph{generic point} in $J_P$, as defined in \cite{B2}. Then it follows that
$$
\lim_{n \rightarrow \infty} \frac{\#\{ 0 \le j < n \mid P^j(z) \in N_{\delta_0}(Y) \}}{n} < \frac{1}{k}.
$$
As $Y$ has measure $0$, we may also assume that the orbit of $z$ never hits the set $Y$.

Write $\overline{x} = (x_0, \ldots, x_N) = \Phi(z)$, and let us estimate the entropy function for $\Phi_*(\nu)$ at $\overline{x}$.

For $\ell < n$ define balls in $J_P$,
$$
B'(n,\ell,\epsilon):= \bigcap_{r=0}^{n-\ell}\{w\in J_P: \rho(\Phi(P^r(w)),\overline{x}_{\ell+r})<\epsilon\}.
$$
Here we use the metric $\rho(\overline{x},\overline{y})=
\max_{0\leq i\leq N} |(x_i-y_i)|$ on a large ball in $\mathbb R^{N+1}$ containing the image of the filled-in Julia set.

The idea is to show that for $\epsilon>0$ small enough,
\begin{equation}\label{eq:division}
\mu(B'(n,\ell-1,\epsilon))\leq \frac{\mu(B'(n,\ell,\epsilon))}{d},
\end{equation}
for all $\ell$ except at most a fraction of $1/k$, and for those we have
$$
\mu(B'(n,\ell-1,\epsilon))\leq \mu(B'(n,\ell,\epsilon)).
$$
It follows that the $\mu$-measure of $B'(n,0,\epsilon)$ is at most $C(1/d)^{n-\frac{n}{k}}.$ Therefore the metric entropy of $\nu$ is at least $\log d.$
To obtain inequality (\ref{eq:division}) one can simply follow the proof of Lemma 4.10 from \cite{FP}.

When $P$ is weakly exceptional simply take $L=\emptyset$. Lemma \ref{lem2} now completes the proof of our theorem.

\end{proof}

\section{A reconstruction theorem for polynomials}
 
The aim of this section is to prove Theorem \ref{thm6}. In order to do so we will need a better understanding of a mirrored set and what happens with mirrors under the iteration.

\begin{definition} Suppose that $z$ mirrors $w$. We say that the {\it mirror breaks} at time $n$  if 
$P^k(z)\neq P^k(w)$ for $k<n$ and $P^n(z)= P^n(w)$.
\end{definition}

 We first observe that for a polynomial $P$ of degree $d\geq 2$, the map $$z\ra(\re(z), \re(P(z)),\re(P^{2}(z)),\re(P^{3}(z)),\ldots),$$
is not injective. This follows from the following simple observation. Take any point $z$ close to the infinity and observe its preimages $z_k\in P^{-1}(z)$. If $\re(z_k)\neq\re(z_j)$ for all $j\neq k$, then by moving our initial point $z$ along the level curve of the Greens function, we can achieve that two preimages satisfy $\re(z_k)=\re(z_j)$. Hence $z_j$ mirrors $z_k$ which proves that the set  mirrored points is never empty.
 In this example the mirror between $z_k$ and $z_j$ breaks at time 1 since $P(z_j)=P(z_k)$, therefore we may ask if all mirrors break and if they do, is there an upper bound on the time. Observe that in general mirrors do not need to brake. For example observe that for a real polynomial $P$ a point $z$ is mirrored by $\bar{z}$ and generic mirrors never break. Another example would be a polynomial with two fixed points lying one above the other. On the other hand the Theorem \ref{thm6} states that for a generic polynomial $P$  all mirrors break at most in time $M< \infty$. The following lemma will be used later in the proof.

\begin{lemma}\label{lem18} If $P(z)=a_dz^d+\ldots+a_1z+a_0$ is non-exceptional polynomial and $a_d\notin\R$, then in a small neighborhood of infinity all mirrors break at time 1.
\end{lemma}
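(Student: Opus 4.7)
The plan is to proceed by contradiction. Suppose $(z_0,w_0)$ is a mirror with both points in a small neighborhood of infinity, $z_0\neq w_0$, and $P(z_0)\neq P(w_0)$; I aim to derive a contradiction that forces $P$ to be weakly exceptional. Set $z_k=P^k(z_0)$, $w_k=P^k(w_0)$, $u_k=w_k/z_k$, and $\alpha=\arg a_d$. Since the orbits lie in the basin of infinity, $|z_k|,|w_k|\gtrsim c^{d^k}$ for some $c>1$, and $P(z)=a_d z^d(1+O(|z|^{-1}))$ gives the two asymptotic recursions
$$
u_{k+1}=u_k^d\bigl(1+O(|z_k|^{-1})+O(|w_k|^{-1})\bigr),\qquad \arg P(w)=\alpha+d\arg(w)+O(|w|^{-1}),
$$
whose log-errors sum to $\log|u_k|=d^k\log|u_0|+O(d^{k-1}/|z_0|)$.

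I would then split on $|u_0|$. In the case $|u_0|=1$, the constraints $\re(z_0)=\re(w_0)$, $|z_0|=|w_0|$, and $z_0\neq w_0$ force $w_0=\bar{z_0}$. For $|u_k|=1$ to persist along the orbit, each step requires $P(\bar{z_k})=\overline{P(z_k)}$, equivalently $\sum_j\im(a_j)\bar{z_k}^{\,j}=0$; since $\im(a_d)\neq 0$ this is a nontrivial polynomial in $\bar{z_k}$ with only finitely many zeros, whereas the orbit $\{z_k\}$ is infinite, so at some step $|u_{k+1}|\neq 1$, reducing to the remaining case. In the case $|u_0|\neq 1$, after swapping I may assume $|u_0|>1$; the ratio recursion then gives $|u_k|\to\infty$, so $|w_k|/|z_k|\to\infty$. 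Combined with $|\re(w_k)|=|\re(z_k)|\leq|z_k|$ this yields $|\re(w_k)|/|w_k|\to 0$, hence $\arg(w_k)\to s_k\pi/2\pmod{2\pi}$ for signs $s_k\in\{+1,-1\}$. Substituting into the angle recursion and passing to the limit $k\to\infty$ shows that for all sufficiently large $k$,
$$
(s_{k+1}-d\,s_k)\tfrac{\pi}{2}\equiv \alpha\pmod{2\pi}.
$$
Reducing modulo $\pi$ collapses this to $\alpha\equiv (d-1)\pi/2\pmod\pi$, which is precisely the weakly exceptional condition $a_d\,i^{d-1}\in\R$, contradicting non-exceptionality.

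The main obstacle I expect is making the ratio recursion sufficiently uniform that $|u_k|\to\infty$ genuinely holds even when $|u_0|$ is very close to $1$; here the doubly exponential lower bound $|z_k|\gtrsim c^{d^k}$ is essential, since it keeps the accumulated error $O(d^{k-1}/|z_0|)$ dominated by the leading term $d^k\log|u_0|$. A secondary subtlety is ruling out mirrors that coalesce at some finite time $n\geq 2$: in that case the pair $(z_{n-1},w_{n-1})$ is a mirror that breaks at time $1$, and one must show that the preceding iterates, traced back through preimage chains in the neighborhood and satisfying $n-1$ successive real-part matching constraints, cannot exist under the hypotheses $a_d\notin\R$ and non-exceptionality.
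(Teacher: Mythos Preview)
Your approach diverges from the paper's and has a genuine gap in precisely the regime that matters. Your Case~2 growth argument needs the leading term $d^k\log|u_0|$ to dominate the accumulated error $O(d^{k-1}/|z_0|)$, i.e.\ you need $|\log|u_0||\gg 1/|z_0|$. But for non-exceptional $P$ every mirrored pair near infinity lies on a single level curve of the Green's function (this is Lemma~3.4 of \cite{FP}, which the paper invokes): in B\"ottcher coordinates $\xi(z)=z+O(1)$ one has $|\xi(z_0)|=|\xi(w_0)|$, hence $|w_0|=|z_0|+O(1)$ and $\bigl||u_0|-1\bigr|=O(1/|z_0|)$. So every actual mirrored pair sits in the borderline regime where your main term and error are of the same order, and neither $|u_k|\to\infty$ nor even the sign of $\log|u_k|$ can be read off the recursion. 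Your Case~1 reduction does not escape this: after you exit to Case~2 at some step $k_0$, the pair $(z_{k_0},w_{k_0})$ is still mirrored, so the same level-curve constraint forces $\bigl||u_{k_0}|-1\bigr|=O(1/|z_{k_0}|)$ again. In effect, your Case~2 angle argument is at best a re-derivation of the level-curve fact itself, not of the lemma; the ``secondary subtlety'' you flag about coalescence at time $n\ge 2$ is a symptom of the same missing control.

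The paper bypasses the ratio dynamics entirely. From $|\xi(z)|=|\xi(w)|$ together with $\re z=\re w$ and $z\neq w$ one reads off $w=\bar z+O(1)$ directly for any mirrored pair near infinity. Then a single computation finishes it:
\[
P(w)-\overline{P(z)}=a_d\bar z^{\,d}-\bar a_d\bar z^{\,d}+O(\bar z^{\,d-1})=(a_d-\bar a_d)\,\bar z^{\,d}+O(\bar z^{\,d-1})\longrightarrow\infty,
\]
since $a_d\notin\R$. If the mirror did not break at time~1, the pair $(P(z),P(w))$ would again be mirrored near infinity and hence satisfy $P(w)=\overline{P(z)}+O(1)$, a contradiction. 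The idea your outline is missing is exactly this one-step use of the B\"ottcher/level-curve structure; once $w=\bar z+O(1)$ is in hand, no iteration, case split on $|u_0|$, or asymptotic angle analysis is needed.
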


\begin{proof} Near infinity there exist a holomorphic change of coordinates of the form $\xi:=z+O(1)$, as $|z|\rightarrow \infty$, that conjugates $P(z)$ to the map $\xi\ra{a_d}\xi^d$. Since $P$ is non-exceptional it follows from Lemma 3.4 in \cite{FP} that any point $z$ near infinity can only be mirrored by a point that lies on the same level curve $\{|\xi|=R\}$ as $z$. Observe that such level curves for $P$ are of form $\{z=R e^{i\varphi}+O(1)\},$
hence near infinity every a mirrored pair $(z,w)$ must satisfy $$w=\bar{z}+O(1).$$

It follows that $$P(w)=a_dw^d+O(w^{d-1})=a_d\bar{z}^d+O(\bar{z}^{d-1}).$$
We also have $$P(z)=a_dz^d+O(z^{d-1}),$$
hence $$P(w)-\overline{P(z)}=(a_d-\bar{a}_d)\bar{z}^d+O(\bar{z}^{d-1}),$$
which converges to infinity as $z\ra\infty$. Hence if $z$ mirrors $w$ then $P(z)=P(w)$.
\end{proof}

In what follows, we will be using only elementary facts about real algebraic and semi-algebraic sets. The reader is referred to the standard text of Bochnak, Coste and Roy \cite{BCR} for
more details.

The set of mirrored pairs of points can be defined in the following way
$$
\tilde{X}=\{(z,w)\mid \re(P^k(z))=\re(P^k(w)) \quad \forall k\geq0\}\subset\mathbb{C}^2.
$$
and as was observed by Forn{\ae}ss and Peters, see \cite[Lemma 2.3]{FP}, there exist an $N$ such that
$$
\tilde{X}=\{(z,w)\mid \re(P^k(z))=\re(P^k(w)) \quad  k\leq N\}.
$$
Note that $\tilde{X}$ is a real algebraic set. The problem with this definition is that the diagonal 
$\Delta:=\{(z,z)\mid z\in\mathbb{C}\}$ is fully contained in $\tilde{X}$ and therefore $\dim \tilde{X}=2$ , see \cite[Lemma 3.10 ]{FP}. As we will see next it is possible to define the mirrored set in  such a way that it contains only finitely many points on the diagonal. Moreover generically these points will be precisely the critical points of the polynomial. 

Let  $P(z)=a_dz^d+\ldots+a_1z+a_0$  and observe that
$$
P(z)-P(w)=(z-w)\left(a_d\sum_{k=0}^{d-1}{z^kw^{d-1-k}}+ a_{d-1}\sum_{k=0}^{d-2}{z^kw^{d-2-k}}+\ldots+a_2(z+w)+a_1\right),
$$
and similarly for $n\geq 1$
$$
P^n(z)-P^n(w)=(z-w)R_n(z,w)
$$
where the polynomial $R_n(z,w)$ satisfies $R_n(z,z)=\frac{d}{dz}P^n(z)$. Next observe that the condition $\re(P^n(z)-P^n(w))=0$ implies
$$
\re(R_n(z,w))\re(z-w)+\im(R_n(z,w))\im(z-w)=0.
$$

Let us define real polynomials
 $$
\begin{aligned}
Q_0(z,w)&=\re(z-w),\\
Q_n(z,w)&=\im\left(\frac{P^n(z)-P^n(w)}{z-w}\right)=\im(R_n(z,w)),
\end{aligned}
$$
for $n\geq 1$ and the set
$$
X=\bigcap_{n\geq0}\{Q_n=0\}.
$$
Observe that $X$ is a real algebraic set contained in $\tilde{X}$ and  that $\tilde{X}\backslash X\subset \Delta$. Moreover for non-exceptional, non-real polynomial $P$ we have $\dim X= 1$, see  \cite[Lemma 3.11.]{FP}.

Next we define for $n\geq 1$  real algebraic sets
 $$Y_n=\{(z,w)\mid Q_k(z,w)=0 \quad \forall 0\leq k<n,\quad R_n(z,w)=0\}.$$
 Observe that these sets satisfy $Y_n\subset Y_{n+1}\subset X$ for all $n\geq 1$, and that Theorem \ref{thm6} is equivalent to $Y_M=X$ for some $M\geq 1$. Therefore our goal is to prove that for a generic polynomial $P$ we can find $M>0$ so that $X\backslash Y_M=\emptyset$. 
 
\begin{proof}[Proof of Theorem \ref{thm6}] Let us start with our fist assumption. 
 \begin{assumption}\label{condition1} Our polynomial $P$ is non-exceptional and its leading coefficient is non-real. Note that this condition is satisfied by an open and dense set of polynomials.
 \end{assumption}
 
First note that $X\backslash Y_n$ are semi-algebraic sets and they satisfy  $X\backslash Y_n\supset  X\backslash Y_{n+1}$ for all $n\geq 1$. Moreover by Lemma \ref{lem18} we know that these sets are bounded. Let us define 
$$S:=\bigcap_{n\geq1} X\backslash Y_n$$
and we have to study two cases.

{\bf Case 1.} If $S=\emptyset$ we claim that $X\backslash Y_M=\emptyset$ for some large $M$. First note  that under this assumption we have the following chain real algebraic sets
$$Y_1\subset Y_2\subset\ldots\subset \bigcup_{n\geq 1}Y_n=X,$$

Recall that every real algebraic set $A$ it is a union of  finitely many irreducible real algebraic sets $A=V_1\cup\ldots\cup V_k$ where $V_j\not\subset V_k$ for $j\neq k$. Note that if $V_1$ and $V_2$ are irreducible real algebraic sets and if $V_1\subset V_2$, then $\dim V_1=\dim V_2$ implies $V_1=V_2$. Since $\dim X$ coincides with the maximal length of the chains of distinct non-empty irreducible real algebraic subsets of $X$  we can see that there must exist $M<\infty$ such that $Y_M=X$ which completes the proof of Theorem \ref{thm6} for this case.   

\medskip 

{\bf Case 2.} Now we assume that  $S\neq\emptyset$. Note that by the definition of $S$ the point $(z,w)\in S$ if and only if  $(z,w)\in X$ and $P^k(z)\neq P^k(w)$ for all $k\geq 0$. It follows that if 
$(z,w)\in S$ then also  $(P^k(z),P^k(w))\in S$ for all $k\geq 1$. With a slight abuse of notation we define a map $P:\mathbb{C}^2\rightarrow \mathbb{C}^2$ by $P(z,w):=(P(z),P(w))$ and note that since $P$ is continuous we have $P(\overline{S})\subset \overline{S}$, where $\overline{S}$ denotes the standard Euclidean closure.  The set $\overline{S}$ is a compact real semi-algebraic set. Our goal is to prove that for a generic polynomial no periodic point can be mirrored, and that for this selection of polynomials the case were $\overline{S} \neq\emptyset$ will imply the existence of a periodic mirrored point which will bring us to the contradiction, hence we must be in Case 1.

We have already made one Assumption \ref{condition1} on our polynomial $P$ and we continue to add several more assumptions:

\begin{assumption}\label{rem1} Our polynomial $P$ satisfies 
$X\cap\Delta=\{(z,z)\mid P'(z)=0\}$ and note that this condition is satisfied by an open and dense set of polynomials.

\emph{ Indeed, this follows from the fact that $X\cap\Delta=\{(z,z)\in\C^2\mid \im(\frac{d}{dz} P^n(z))=0 \text{ for all } n\geq 1\}$.}
\end{assumption}

\begin{assumption}\label{rem3} The critical set of our polynomial $P$ does not contain any periodic cycle.  Note that this condition is satisfied by an open and dense set of polynomials. \end{assumption}

\begin{assumption}\label{rem2} Our polynomial $P$  has no Siegel disks and note that this condition is satisfied by an open and dense set of polynomials.

 \emph{Indeed, suppose that $P$ has a Siegel disk. Then we can take an arbitrarily small perturbation of the polynomial for which the neutral periodic point at the center of the Siegel disk becomes attracting. As attracting periodic points are stable under sufficiently small perturbations and since the number of periodic attracting cycles is bounded by the degree of the polynomial minus $1$, the set of polynomials for which the number of distinct attracting cycles is locally maximal is open and dense, and these polynomials do not have Siegel disks.}
\end{assumption}

\begin{assumption}\label{rem4} Our polynomial $P$ has no mirrored periodic points.  Note that this condition is satisfied by a generic polynomial.

\emph{Indeed, first observe that for any non-exceptional polynomial, a periodic point can only be mirrored by another pre-periodic point which follows from the simple fact that a point can have at most $(\deg P)^2$ mirrors. Next observe that if we conjugate a polynomial $P$ by a rotation, i.e. $P_\varphi(z)=e^{i\varphi}P(ze^{-i\varphi})$, there are only countably many angles $\varphi\in[0,2\pi]$ for which there are two periodic points lying one above the other so for a generic polynomial no two periodic point can mirror each other. Using the same argument can also make sure that a periodic point $z$ does not mirror any of it's pre-images $\{w\mid P(w)=z\}$. This shows that for a generic polynomial no periodic point can be mirrored by a pre-periodic point.}
\end{assumption}

  Let $P$ be a generic polynomial that satisfies all the conditions from our Assumptions \ref{condition1}, \ref{rem1}, \ref{rem3}, \ref{rem2} and \ref{rem4}. Observe that the Assumptions  \ref{rem1} and  \ref{rem3} in combination with the property $P(\overline{S})\subset \overline{S}$ imply that $\overline{S}\cap \Delta=\emptyset$. Next observe that Assumptions  \ref{rem2} and  \ref{rem4} imply that $\overline{S}\subset J\times J$, where $J$ is the Julia set of $P$. Indeed, if $z$ belongs to the Fatou component, i.e. connected component of $\mathbb{C}\backslash J$, and if 
 $(z,w)\in  S$ then there is a subsequence of iterates $(P^{n_k}(z),P^{n_k}(w))\rightarrow (\zeta,\zeta)$ as $k\rightarrow \infty$, where $P(\zeta)=\zeta$. But this implies that $(\zeta,\zeta)\in  \overline{S}$ which can not happen since $\overline{S}\cap \Delta=\emptyset$.

The semi-algebraic set $\overline{S}$  has finitely many connected components and since $P(\overline{S})\subset \overline{S}$ there is a component $V$ of $\overline{S}$ and  $k\geq 1$ such that $P^k(V)\subset V$. The component $V$ is a compact semi-algebraic set which is a triangulable space and recall that Lefschetz fixed point Theorem states that every continuous self-map $f $ of compact triangulable space $X$ with non-zero Lefschetz number
$$\Lambda_f:=\sum_{k\geq0}(-1)^k\mathrm{Tr}(f_*\mid H_k(X,\mathbb{Q}))$$
has a fixed point.

Let $proj_1:\C^2\ra\C$ be a projection to the first coordinate and note that $W:=proj_1(V)$ is a compact semi-algebraic set. If it is contractible, then  $H_k(W,\mathbb{Q})=0$ for $k\geq1$, hence 
$$\Lambda_{P^k}=\mathrm{Tr}(P^k_*\mid H_0(W,\mathbb{Q}))=1.$$ 
It follows that $P^k$ has a fixed point which is a contradiction with our earlier observation that no periodic points are mirrored.

\medskip
Finally suppose that $W$ is not contractible. Then there is a $P^k$-invariant bounded Fatou component $\Omega$ whose boundary  is a subset of $W$. By replacing $W$ with $P^n(W)$ for some $n<k$ if necessary we may assume that $\Omega$ contains a critical point. A map $P^k:\partial \Omega \ra \partial \Omega $ is a holomorphic self-map of a Jordan curve, hence
$$
\begin{aligned}
\Lambda_{P^k}&=\mathrm{Tr}(P^k_*\mid H_0(\partial \Omega,\mathbb{Q}))-\mathrm{Tr}(P^k_*\mid H_1(\partial \Omega,\mathbb{Q}))\\
\Lambda_{P^k}&=1-\mathrm{degree(P^k,\partial \Omega)}\neq0.
\end{aligned}
$$
It follows that $P^k$ has a fixed point in $\partial \Omega\subset W$ which is again a contradiction with our assumption that no periodic points are mirrored. This completes the proof of the Theorem \ref{thm6} in case 2.

\end{proof}
\medskip

The following two examples show that the set of polynomials satisfying Theorem \ref{thm6}  is not open, but it has a non-empty interior.
\medskip

\begin{example} Take  polynomials $P_{\varepsilon,\varphi}(z)=z+e^{i\varphi}(z^2+\varepsilon)$, where $\varepsilon>0$. Observe that for a generic $\varphi\in[0,2\pi]$ the polynomial $P_{0,\varphi}(z)=z+e^{i\varphi}z^2$ satisfies all the conditions in the proof of Theorem \ref{thm6}, hence there exist $P_{0,\varphi}$ for which the Theorem  \ref{thm6} holds. Now observe that $P_{\varepsilon,\varphi}$ converges to $P_{0,\varphi}$ and that every $P_{\varepsilon,\varphi}$ has two fixed points $-i\sqrt{\varepsilon}$  and $+i\sqrt{\varepsilon}$ with the same real parts, hence the set of polynomials  for which the Theorem  \ref{thm6} holds is not open. \end{example}
\medskip

\begin{example} Let $d\geq2$ and $P(z)=iz^d-e^{i\psi}c$ where $c:=c(d)$ is some large positive constant. We will prove that $P$ and any small perturbation of $P$ satisfies Theorem \ref{thm6}.

Let us define $R_c=\sqrt[d]{c+c^{\frac{3}{2d}}}$, $r_c=\sqrt[d]{c-c^{\frac{3}{2d}}}$ and for every $k\in\{0,1,\ldots, d-1\}$ we define
$$V_k=\left\{z\in\C \mid r_c<|z|<R_c,\quad \left|d\arg{z}-\psi-2k\pi-\frac{\pi}{2}\right|<\frac{2R_c}{c} \right\}.$$
where $\frac{2R_c}{c}\rightarrow  0$ when $c\rightarrow \infty$. Now define $$V=\bigcup_{k=0}^{d-1}V_k.$$
 We will see that for sufficiently large $c$ our map $P$ will map a complement of $V$  into the complement of a ball of radius $R_c$ centered at the origin. It is easy to verify that $P$ maps the ball of radius $r_c$ to the complement of the ball of radius $R_c$, and that $P$ maps to the complement of the ball of radius $R_c$ to itself. Observe that
$$|P(re^{i\varphi})|=\sqrt{r^{2d}+c^2+2cr^d\sin{(d\varphi-\psi)}}$$
and that one obtains a minimum when $r^d=-c\sin{(d\varphi-\psi)}$, hence
$$|P(re^{i\varphi})|\geq c|\cos{(d\varphi-\psi)}|.$$
Now observe that if $\left|d\arg{z}-\psi-2k\pi-\frac{\pi}{2}\right|>\frac{2R_c}{c}$, then for sufficiently large $c$ we have $|\cos{(d\varphi-\psi)}|>\frac{R_c}{c}$, and therefore  $|P(re^{i\varphi})|>R_c$. We have proven that  the complement of $V$  is mapped by $P$ to the complement of a ball of radius $R_c$ centered at the origin.

Taking $c$ sufficiently large we can assume that the critical point has an unbounded orbit, hence the Julia set $J$ is totally disconnected and it is contained in $V$. Observe that every $V_k$ contains a fixed point. Let  $\psi=\frac{\pi}{\sqrt{5}}$ or any other irrational angle then by increasing $c$  if necessary we may assume that the projections of $V_k$ to the real line are pairwise disjoint.

Using symbolic dynamics we can uniquely identify every point from the Julia set,  by elements in $\{1, \ldots , d\}^{\mathbb N}$. Suppose $\{a_n\}_{n\geq0}$ is such sequence then there is a unique point $z\in J$ with the property $P^n(z)\in V_{a_n}$. This implies that points in the Julia set can not mirror each other, moreover it follows from Lemma \ref{lem18} that points in Julia set have no mirrors. As a consequence we obtain that the set $\overline{S}$, defined in the proof of the Theorem \ref{thm6}, is an empty set. Hence there exists integer $M>0$ such that every mirror breaks at time $k\leq M$.

By Mane-Sad-Sullivan \cite{MSS} our polynomial $P$ is $J$-stable polynomial when $c$ is large, hence the dynamics of small perturbation of polynomial $P$ resembles the dynamics of initial polynomial $P$, moreover their Julia sets are topologically conjugated. Since above equations are only slightly perturbed when $P$ is perturbed we can conclude that there exists an open set of polynomials satisfying Theorem \ref{thm6}.
\end{example}
\medskip

{\bf Final remarks:} The question remains if Theorem \ref{thm6} also holds for an open and dense set of polynomials. Note that the openness is lost by the Assumption \ref{rem4} which ensures that no periodic point is mirrored. If one can find a better argument which would imply that the set of polynomials satisfying  the Assumption \ref{rem4}  is open and dense then this would give a positive answer to the question above.

It is not to difficult to see that the constant $N$ in the Theorem \ref{thm6} can be chosen to depend only on the degree of a polynomial. This naturally leads to the question if the same holds for the constant $M$. Moreover it would be interesting if we could find their numerical values.

\section*{Acknowledgments}
 The author would like would like to express his gratitude to Han Peters for helpful conversations and comments during the time of research and during the process of writing this article. The author would also like to thank the referee for useful comments that helped improve the clarity and the relevance of this paper. Author was supported by the Slovenian Research Agency (ARRS).

\end{document}